\documentclass[12pt]{amsart}
\pagestyle{plain}

\usepackage{fourier}
\usepackage[T1]{fontenc}
\usepackage{graphics}
\usepackage{amsfonts,amssymb,color}
\usepackage[mathscr]{eucal}
\usepackage{amsmath, amsthm}
\usepackage{mathrsfs}
\usepackage{amsbsy}
\usepackage{bbm}
\usepackage{wasysym}
\usepackage{stmaryrd}
\usepackage{url}

\input xypic
\xyoption{all}






\makeindex
\makeglossary

\begin{document}
\baselineskip = 16pt

\newcommand \ZZ {{\mathbb Z}}
\newcommand \NN {{\mathbb N}}
\newcommand \RR {{\mathbb R}}
\newcommand \PR {{\mathbb P}}
\newcommand \AF {{\mathbb A}}
\newcommand \GG {{\mathbb G}}
\newcommand \QQ {{\mathbb Q}}
\newcommand \bcA {{\mathscr A}}
\newcommand \bcC {{\mathscr C}}
\newcommand \bcD {{\mathscr D}}
\newcommand \bcF {{\mathscr F}}
\newcommand \bcG {{\mathscr G}}
\newcommand \bcH {{\mathscr H}}
\newcommand \bcM {{\mathscr M}}
\newcommand \bcJ {{\mathscr J}}
\newcommand \bcL {{\mathscr L}}
\newcommand \bcO {{\mathscr O}}
\newcommand \bcP {{\mathscr P}}
\newcommand \bcQ {{\mathscr Q}}
\newcommand \bcR {{\mathscr R}}
\newcommand \bcS {{\mathscr S}}
\newcommand \bcU {{\mathscr U}}
\newcommand \bcV {{\mathscr V}}
\newcommand \bcW {{\mathscr W}}
\newcommand \bcX {{\mathscr X}}
\newcommand \bcY {{\mathscr Y}}
\newcommand \bcZ {{\mathscr Z}}
\newcommand \goa {{\mathfrak a}}
\newcommand \gob {{\mathfrak b}}
\newcommand \goc {{\mathfrak c}}
\newcommand \gom {{\mathfrak m}}
\newcommand \gon {{\mathfrak n}}
\newcommand \gop {{\mathfrak p}}
\newcommand \goq {{\mathfrak q}}
\newcommand \goQ {{\mathfrak Q}}
\newcommand \goP {{\mathfrak P}}
\newcommand \goM {{\mathfrak M}}
\newcommand \goN {{\mathfrak N}}
\newcommand \uno {{\mathbbm 1}}
\newcommand \Le {{\mathbbm L}}
\newcommand \Spec {{\rm {Spec}}}
\newcommand \Gr {{\rm {Gr}}}
\newcommand \Pic {{\rm {Pic}}}
\newcommand \Jac {{{J}}}
\newcommand \Alb {{\rm {Alb}}}
\newcommand \Corr {{Corr}}
\newcommand \Chow {{\mathscr C}}
\newcommand \Sym {{\rm {Sym}}}
\newcommand \Prym {{\rm {Prym}}}
\newcommand \cha {{\rm {char}}}
\newcommand \eff {{\rm {eff}}}
\newcommand \tr {{\rm {tr}}}
\newcommand \Tr {{\rm {Tr}}}
\newcommand \pr {{\rm {pr}}}
\newcommand \ev {{\it {ev}}}
\newcommand \cl {{\rm {cl}}}
\newcommand \interior {{\rm {Int}}}
\newcommand \sep {{\rm {sep}}}
\newcommand \td {{\rm {tdeg}}}
\newcommand \alg {{\rm {alg}}}
\newcommand \im {{\rm im}}
\newcommand \gr {{\rm {gr}}}
\newcommand \op {{\rm op}}
\newcommand \Hom {{\rm Hom}}
\newcommand \Hilb {{\rm Hilb}}
\newcommand \Sch {{\mathscr S\! }{\it ch}}
\newcommand \cHilb {{\mathscr H\! }{\it ilb}}
\newcommand \cHom {{\mathscr H\! }{\it om}}
\newcommand \colim {{{\rm colim}\, }} 
\newcommand \End {{\rm {End}}}
\newcommand \coker {{\rm {coker}}}
\newcommand \id {{\rm {id}}}
\newcommand \van {{\rm {van}}}
\newcommand \spc {{\rm {sp}}}
\newcommand \Ob {{\rm Ob}}
\newcommand \Aut {{\rm Aut}}
\newcommand \cor {{\rm {cor}}}
\newcommand \Cor {{\it {Corr}}}
\newcommand \res {{\rm {res}}}
\newcommand \red {{\rm{red}}}
\newcommand \Gal {{\rm {Gal}}}
\newcommand \PGL {{\rm {PGL}}}
\newcommand \Bl {{\rm {Bl}}}
\newcommand \Sing {{\rm {Sing}}}
\newcommand \spn {{\rm {span}}}
\newcommand \Nm {{\rm {Nm}}}
\newcommand \inv {{\rm {inv}}}
\newcommand \codim {{\rm {codim}}}
\newcommand \Div{{\rm{Div}}}
\newcommand \sg {{\Sigma }}
\newcommand \DM {{\sf DM}}
\newcommand \Gm {{{\mathbb G}_{\rm m}}}
\newcommand \tame {\rm {tame }}
\newcommand \znak {{\natural }}
\newcommand \lra {\longrightarrow}
\newcommand \hra {\hookrightarrow}
\newcommand \rra {\rightrightarrows}
\newcommand \ord {{\rm {ord}}}
\newcommand \Rat {{\mathscr Rat}}
\newcommand \rd {{\rm {red}}}
\newcommand \bSpec {{\bf {Spec}}}
\newcommand \Proj {{\rm {Proj}}}
\newcommand \pdiv {{\rm {div}}}
\newcommand \CH {{\it {CH}}}
\newcommand \wt {\widetilde }
\newcommand \ac {\acute }
\newcommand \ch {\check }
\newcommand \ol {\overline }
\newcommand \Th {\Theta}
\newcommand \cAb {{\mathscr A\! }{\it b}}

\newenvironment{pf}{\par\noindent{\em Proof}.}{\hfill\framebox(6,6)
\par\medskip}

\newtheorem{theorem}[subsection]{Theorem}
\newtheorem{conjecture}[subsection]{Conjecture}
\newtheorem{proposition}[subsection]{Proposition}
\newtheorem{lemma}[subsection]{Lemma}
\newtheorem{remark}[subsection]{Remark}
\newtheorem{remarks}[subsection]{Remarks}
\newtheorem{definition}[subsection]{Definition}
\newtheorem{corollary}[subsection]{Corollary}
\newtheorem{example}[subsection]{Example}
\newtheorem{examples}[subsection]{examples}

\title{Theta divisors of abelian varieties and push-forward homomorphism at the level of Chow groups}
\author{ Kalyan Banerjee}

\address{Indian Statistical Institute, Bangalore Center, Bangalore 560059}


\email{kalyanb$_{-}$vs@isibang.ac.in}
\footnotetext{Mathematics Classification Number: 14C25, 14D05, 14D20,
 14D21}
\footnotetext{Keywords: Pushforward homomorphism, Theta divisor, Jacobian varieties, Chow groups, higher Chow groups.}
\begin{abstract}
In this text we prove that if an abelian variety $A$ admits of an embedding into the Jacobian of a smooth projective curve $C$, and if we consider $\Th_A$ to be the divisor $\Th_C\cap A$, where $\Th_C$ denotes the theta divisor of $J(C)$, then the embedding of $\Th_A$ into $A$ induces an injective push-forward homomorphism at the level of Chow groups. We show that this is the case for every principally polarized abelian varieties.
\end{abstract}

\maketitle

\section{Introduction}
In the paper \cite{BI} the authors were investigating the following question. Let $C$ be a smooth projective curve of genus $g$ and let $\Th$ denote the theta divisor embedded into the Jacobian $J(C)$ of the curve $C$. Let $j$ denote this embedding. Then the push-forward homomorphism $j_*$ at the level of Chow groups is injective. Also in this paper the author discussed about the push-forward homomorphism at the level of Chow groups induced by the closed embedding of some special divisors in the Jacobian $J(C)$, arising from finite, \'etale coverings of the curve $C$, see \cite{BI}[theorem $4.1$].

In this paper we investigate the following question for an arbitrary principally polarized abelian variety $A$ . That is let $A$ be a principally polarized abelian variety and let $H$ denote a divisor embedded inside $A$. Let $j$ denote this embedding. Then can we say that the push-forward homomorphism $j_*$ at the level of Chow groups of $k$-dimensional cycles ($k\geq 0$) is injective? This question is affirmatively answered in the case when the abelian variety $A$ is embedded in the Jacobian variety  and we consider the divisor $\Th\cap A$ inside $A$, where $\Th$ is the theta divisor of $J(C)$. This is exactly the case of Prym-Tyurin varieties, which are abelian varieties embedded inside some Jacobian variety, and the intersection of the theta divisor of $J(C)$ with $A$ is linearly equivalent to some multiple of the Theta divisor of $A$. Since any principally polarized abelian variety is a Prym-Tyurin variety of some exponent (see \cite{BL} corollary $12.2.4$), so for the principally polarized abelian varieties the above question about the injectivity of the push-forward homomorphism at the level of Chow groups of $k$-dimensional cycle ($k\geq0$) is  answered, when the divisor $H$ is the intersection of the abelian variety $A$ with the theta divisor of the ambient Jacobian variety, where $A$ is embedded.

\textit{Let $A$ be a principally polarized abelian variety embedded into $J(C)$ for some smooth projective curve $C$. Let $\Theta$ be the theta divisor of $J(C)$. Then  the embedding of $\Theta\cap A$ into $A$ induces an injective push-forward homomorphism at the level of Chow groups of $k$-cycle with $k\geq0$.}

As an application we get that the embedding of the theta divisor inside a Prym variety induces injection at the level of Chow groups. We also show that if we start with an principally polarized abelian surface $A$ and consider the corresponding $K3$-surface, then the push-forward at the level of Chow groups induced by the closed embedding of the divisor coming from $\Th\cap A$ inside the $K3$-surface is injective.

{\small \textbf{Acknowledgements:} The author is indebted to A.Beauville for communicating the main application of the paper. The author expresses his sincere gratitude to Jaya Iyer for telling this problem about injectivity of push-forward induced by the closed embedding of a divisor into an abelian variety, to the author and for discussions relevant to the theme of the paper. The author thanks C.Voisin for pointing out the fact that this injectivity of the push-forward is not true for  zero cycles on a very general divisor on an abelian variety and also for relevant discussion regarding the theme of the paper. Also the author wishes to thank the ISF-UGC grant for funding this project and hospitality of Indian Statistical Institute, Bangalore Center  for hosting this project.}

\section{Abelian varieties embedded in Jacobians}
Let $A$ be an abelian variety embedded inside the Jacobian of a smooth projective curve $C$. Let $\Th_C$ denote the theta divisor of the Jacobian $J(C)$. Consider $\Th_A$ to be $\Th_C\cap A$, and the closed embedding of $\Th_A$ into $A$, denote it by $j_A$, then we prove that $j_{A*}$ is injective from $\CH_*(\Th_A)$ to $\CH_*(A)$.

To prove that first we show that the embedding of $\Th_C$ into $J(C)$ gives rise to an injection at the level of Chow groups. Although this has been proved in \cite{BI}[theorem 3.1], here we present an alternative proof following \cite{Collino} which gives a better understanding of the picture when we blow up $J(C)$ along some subvariety (in our case finitely many points).

\begin{theorem}
The embedding of the theta divisor $\Th_C$ into $J(C)$ for a smooth projective curve $C$, gives rise to an injection at the level of Chow groups.
\end{theorem}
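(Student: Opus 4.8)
The plan is to realise the theta divisor as the image $W_{g-1}$ of the Abel--Jacobi morphism $\phi\colon \Sym^{g-1}C \to J(C)$ and to exploit that $\phi$ is a proper birational resolution of $\Th_C$. First I would recall that $\Sym^{g-1}C$ is smooth, that $\phi$ is an isomorphism over the smooth locus of $\Th_C$, and that by the Riemann singularity theorem $\Sing \Th_C$ is the Brill--Noether locus $W^{1}_{g-1}$, over which $\phi$ contracts the projective spaces $|D|\cong \PR^{\,h^0(D)-1}$. In the model case we concentrate on, this locus consists of finitely many points, so that $\phi$ is the contraction of finitely many projective spaces to finitely many nodes of $\Th_C$.

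Following \cite{Collino}, the key device is to blow up the ambient Jacobian, not merely to resolve the divisor. Let $Z=\Sing \Th_C$ (finitely many points in our case) and let $\sigma\colon \wt{J}=\Bl_Z J(C)\to J(C)$ be the blow-up, with exceptional divisor $\mathcal E$ a disjoint union of copies of $\PR^{g-1}$. The strict transform $\wt\Th\subset \wt J$ is then smooth and is a resolution of $\Th_C$ birational to $\Sym^{g-1}C$, and one gets a closed embedding $\tilde\jmath\colon \wt\Th\hookrightarrow \wt J$ together with $\tau:=\sigma|_{\wt\Th}\colon \wt\Th\to\Th_C$, so that $\sigma\circ\tilde\jmath = j\circ\tau$ and hence $\sigma_*\circ\tilde\jmath_* = j_*\circ\tau_*$. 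The advantage is that in the resolved picture the blow-up formula of Fulton gives a canonical direct-sum decomposition $\CH_*(\wt J)\cong \CH_*(J(C))\oplus\bigoplus_{j=1}^{\codim Z-1}\CH_{*-j}(Z)$, with $\sigma_*$ the corresponding projection, and an analogous decomposition relates $\CH_*(\wt\Th)$ to $\CH_*(\Th_C)$.

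With this in place I would prove injectivity of $j_*$ by a diagram chase. Since $\tau$ is proper and birational, $\tau_*\colon \CH_k(\wt\Th)\to\CH_k(\Th_C)$ is surjective; given $z\in\CH_k(\Th_C)$ with $j_*z=0$, lift it to $\wt z$ with $\tau_*\wt z=z$. Then $\sigma_*\tilde\jmath_*\wt z = j_*\tau_*\wt z = j_*z = 0$, so $\tilde\jmath_*\wt z$ lies in the exceptional summand $\ker\bigl(\sigma_*\colon\CH_k(\wt J)\to\CH_k(J(C))\bigr)$. The heart of the argument is to establish that $\tilde\jmath_*$ is injective and that it carries the exceptional part of $\CH_*(\wt\Th)$ isomorphically onto its intersection with the exceptional part of $\CH_*(\wt J)$; granting this, $\wt z$ must itself be exceptional, whence $z=\tau_*\wt z=0$. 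Injectivity of $\tilde\jmath_*$ is exactly where the resolved geometry pays off: $\wt\Th$ is smooth, and Collino's description of the Chow groups of symmetric products, together with the projective-bundle structure of the Abel--Jacobi maps in the stable range, lets one compute both $\CH_*(\wt\Th)$ and $\CH_*(\wt J)$ explicitly and match them.

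The step I expect to be the main obstacle is precisely this bookkeeping of the exceptional contributions: one must verify that the classes of the contracted projective spaces (and of their subvarieties) introduced by $\tilde\jmath_*$ are genuinely independent from the image of $\CH_*(\Th_C)$, so that no nonzero class on $\Th_C$ can be absorbed into the exceptional locus after pushing forward. A secondary difficulty is that for general $g$ the singular locus $W^{1}_{g-1}$ is positive-dimensional and may itself be singular, so the clean ``blow up finitely many points'' picture must be replaced either by a stratified blow-up or by an inductive passage through the loci $W_d$ for $d=1,\dots,g-1$, with the blow-up formula applied accordingly; keeping the splittings compatible at each stage is the technical core of the argument.
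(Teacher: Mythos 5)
Your proposal takes a genuinely different route from the paper, but as written it has a gap that is not a technicality: the step you yourself flag as ``the heart of the argument'' --- that $\tilde\jmath_*\colon\CH_*(\wt\Th)\to\CH_*(\wt J)$ is injective and identifies the exceptional summand of the source with its counterpart in the target --- is never established, and it is at least as hard as the statement you are trying to prove. Indeed, the paper's Theorem \ref{theorem3} goes in the opposite direction: it deduces injectivity for the blown-up pair $(\wt\Th,\wt J)$ from injectivity for $(\Th_C,J(C))$, not the other way around. There are two further problems. First, Fulton's blow-up decomposition applies to the blow-up of a \emph{smooth} variety along a regularly embedded centre; $\Th_C$ is singular along $Z=\Sing\Th_C$, so the ``analogous decomposition'' of $\CH_*(\wt\Th)$ in terms of $\CH_*(\Th_C)$ and $\CH_*(Z)$ does not follow from that formula, and without it the bookkeeping of exceptional classes cannot even be set up. Second, the model case $\dim\Sing\Th_C=0$ covers only small genus for a general curve (for $g\ge 5$ one has $\dim W^1_{g-1}=g-4>0$), and the proposed remedy --- a stratified or iterated blow-up --- is left entirely open.

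For contrast, the paper avoids resolving $\Th_C$ altogether. It uses the correspondence $\Gamma_1\subset J(C)\times\Th_C$ induced by the projection $C^g\to C^{g-1}$, shows by a pointwise computation that $(j\times\id)^*\Gamma_1=\Delta+Y$ with $Y$ supported on $\Th_C\times\sum_{i=1}^{g-2}C_i$, and concludes that ${\Gamma_1}_*j_*$ agrees with the identity after restriction to the complement of $\sum_{i=1}^{g-2}C_i$. The localisation exact sequence then pushes any $z$ with $j_*z=0$ into a subvariety of smaller dimension, and induction on dimension finishes the proof. If you want to keep your blow-up framework, you would still need an input of exactly this correspondence-theoretic kind to control $\tilde\jmath_*$; the resolution by itself buys you smoothness of $\wt\Th$ but not injectivity of the push-forward.
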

\begin{proof}
We use the fact that $\Sym^g C$ maps surjectively and birationally onto $J(C)$ and $\Sym^{g-1}C$ maps surjectively and birationally onto $\Th_C$. We have a natural correspondence $\Gamma$ given by $\pi_g\times \pi_{g-1}(Graph(pr))$, where $pr$ is the projection from $C^g$ to $C^{g-1}$, $\pi_i$ is the natural morphism from $C^i$ to $\Sym^i C$. Consider the correspondence $\Gamma_1$ on $J(C)\times \Th_C$ given by
$(f_1\times f_2)(\Gamma)$,
where $f_1,f_2$ are natural morphisms from $\Sym^{g-1}C,\Sym^g C$ to $\Th_C,J(C)$ respectively. Then by projection formula it follows that
$${\Gamma_1}_*j_*$$
is induced by $(j\times id)^*(\Gamma_1)$
where $j$ is the closed embedding of $\Th_C$ into $J(C)$. Now we compute the cycle
$$(j\times id)^*(\Gamma_1)$$
that is nothing but the collection of divisors
$$(D_1,D_2)$$
such that $D_1$ is linearly equivalent to $\sum_{i=1}^{g-1} x_i-(g-1)p$ and $D_2$ is linearly equivalent to $\sum_{i=1}^{g-1}y_i-(g-1)p$, where
$$([x_1,\cdots,x_{g-1},p],[y_1,\cdots,y_{g-1}])\in \Gamma\;.$$
Therefore without loss of generality we can assume that elements in $(j\times id)^*(\Gamma_1)$ are classes of effective divisors on $C$ of the form
$$([x_1+\cdots+x_{g-1}+p-gp],[y_1+\cdots+y_{g-1}+p-gp])$$
such that
$$([x_1,\cdots,x_{g-1},p],[y_1,\cdots,y_{g-1}])\in \Gamma$$
so we get that either
$$x_i=y_i$$
for all $i$ or
$$y_i=p$$
for some $i$. Therefore we get that $(j\times id)^*\Gamma_1$ is equal to
$$\Delta+Y$$
where $Y$ is supported on $\Th_C\times \sum_{i=1}^{g-2}C_i$. The fact that the multiplicity of $\Delta$ in $(j\times id)^*(\Gamma_1)$ is $1$ follows from the fact that $\Sym^g C$ maps surjectively and birationally onto $J(C)$, and the computations following \cite{Collino}. Also here the Chow moving lemma holds for $\Th_C\times \Th_C$, because it holds for $\Sym^{g-1}C\times \Sym^{g-1}C$ with the cycles taken with $\QQ$-coefficients and the fact that $f_1$ is birational. So let
$$\rho:U\to \Th_C$$
be the open embedding of the complement of $\sum_{i=1}^{g-2}C_i$ in $\Th_C$. Then we have
$$\rho^*{\Gamma_1}_*j_*(Z)=\rho^*(Z+Z_1)=\rho^*(Z)$$
where $Z_1$ is supported on $\sum_{i=1}^{g-2}C_i$. This follows since $(j\times id)^*(\Gamma_1)=\Delta+Y$, where $Y$ is supported on $\Th_C\times\sum_{i=1}^{g-2}C_i$. Now consider the following commutative diagram.
$$
  \xymatrix{
     \CH_*(\sum_{i=1}^{g-2}C_i) \ar[r]^-{j'_{*}} \ar[dd]_-{}
  &   \CH_*(\Th_C) \ar[r]^-{\rho^{*}} \ar[dd]_-{j_{*}}
  & \CH_*(U)  \ar[dd]_-{}  \
  \\ \\
   \CH_*(\sum_{i=1}^{g-2}C_i) \ar[r]^-{j''_*}
    & \CH_*(J(C)) \ar[r]^-{}
  & \CH^*(V)
  }
$$
Here $U,V$ are complements of $\sum_{i=1}^{g-2}C_i$ in $\Th_C,J(C)$ respectively. Now suppose that $j_*(z)=0$, then from the previous it follows that
$$\rho^*{\Gamma_1}_*j_*(z)=\rho^*(z)=0$$
by the localisation exact sequence it follows that there exists
$z'$ in $\sum_{i=1}^{g-2}C_i$ such that $j'_*(z')=z$. By the commutativity and the induction hypothesis it follows that
$$j''_*(z')=0$$
since $\sum_{i=1}^{g-2}C_i$ is of dimension $g-2$. So we get that $z'=0$ hence $z=0$. So $j_*$ is injective.
\end{proof}

Now we prove the following:

\begin{theorem}
\label{theorem2} Let $A$ be a principally polarized abelian variety embedded into Jacobian $J(C)$ of a smooth projective curve $C$. Let $\Th_C$ denote the theta divisor of $J(C)$. Pull it back to $A$ and denote the pull-back by $\Th_A$. Suppose that $\Sym^{g-i}C\cap A'$ is smooth for all $i\geq 0$, where $A'$ is the inverse image of $A$, under the map $\Sym^g C\to J(C)$. Then the closed embedding $j:\Th_A\to A$ induces an injection $j_*$ at the level of Chow groups.
\end{theorem}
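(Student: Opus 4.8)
The plan is to repeat the argument of the previous theorem with every object intersected with $A$, the smoothness hypothesis playing the role that birationality of $f_1,f_2$ played before. Write $f_2:\Sym^g C\to J(C)$ and $f_1:\Sym^{g-1}C\to\Th_C$ for the two parametrizations and set $A'=f_2^{-1}(A)$, so that $A'$ is the variety appearing in the statement. The hypothesis that $\Sym^{g-i}C\cap A'$ is smooth for all $i\geq0$ guarantees that the restriction of $f_2$ to $A'$ is a birational morphism from a smooth variety onto $A$, that the analogous restriction of $f_1$ onto $\Th_A$ is birational, and that every boundary stratum cut out on these preimages is again smooth. This is exactly what is needed to run the correspondence calculus and the $\QQ$-coefficient Chow moving lemma on $A\times A$ and on $\Th_A\times\Th_A$, just as $f_1,f_2$ birational was used on $\Th_C\times\Th_C$.

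First I would restrict the correspondence. Intersecting the graph correspondence $\Gamma$ with $A'$ in its first factor and pushing forward by $f_2\times f_1$ produces a correspondence $\Gamma_1^A$ on $A\times\Th_A$, and by the projection formula $(\Gamma_1^A)_*\,j_*$ is induced by $(j\times\id)^*(\Gamma_1^A)$, where $j:\Th_A\to A$ is the embedding we wish to understand. Next I would compute this pullback cycle by the same divisor-theoretic bookkeeping as before: a pair $(D_1,D_2)$ lying in it satisfies $D_1\sim\sum_{i=1}^{g-1}x_i-(g-1)p$ and $D_2\sim\sum_{i=1}^{g-1}y_i-(g-1)p$ with the compatibility imposed by $\Gamma$, now subject to the extra condition that the underlying point of $\Sym^g C$ lies on $A'$. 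As before this forces either $x_i=y_i$ for all $i$ or $y_i=p$ for some $i$, so that
$$(j\times\id)^*(\Gamma_1^A)=\Delta_A+Y_A,$$
where $\Delta_A$ is the diagonal of $\Th_A$ with multiplicity one and $Y_A$ is supported on $\Th_A\times(A\cap\sum_{i=1}^{g-2}C_i)$, whose second factor has dimension strictly smaller than $\dim\Th_A$.

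I would then close the induction exactly as in the previous theorem. Letting $\rho:U\to\Th_A$ be the open complement of $A\cap\sum_{i=1}^{g-2}C_i$ in $\Th_A$, if $j_*(z)=0$ then $\rho^*(z)=\rho^*(\Gamma_1^A)_*j_*(z)=0$, so the localization sequence produces a class $z'$ supported on the smooth, strictly lower-dimensional locus $A\cap\sum_{i=1}^{g-2}C_i$ with $j'_*(z')=z$. The commutative square relating the two embeddings gives $j''_*(z')=0$, where $j''$ is the embedding of this locus into $A$, and the inductive hypothesis, applied along the descending chain of smooth strata $A\cap\Sym^{g-1}C\supset A\cap\Sym^{g-2}C\supset\cdots$ and bottoming out at a point, forces $z'=0$ and hence $z=0$.

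The main obstacle is the multiplicity-one computation of $\Delta_A$ in $(j\times\id)^*(\Gamma_1^A)$. After restriction to $A$ one must rule out excess intersection along the diagonal and check that the non-transversality of $A$ with the parametrizing strata creates no extra components outside the boundary type controlled by $Y_A$; this is precisely where the smoothness of every $\Sym^{g-i}C\cap A'$ enters, keeping the restricted parametrizations birational and the strata reduced so that the local intersection multiplicities are computed exactly as in the ambient case following \cite{Collino}.
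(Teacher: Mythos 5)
Your proposal follows essentially the same route as the paper's own proof: restricting the Collino-type correspondence $\Gamma_1$ to $A\times\Th_A$, computing $(j\times\id)^*$ of it as the diagonal plus a term supported on $\Th_A\times(\sum_{i=1}^{g-2}C_i\cap A)$, and closing via the localization sequence and induction on dimension, with the smoothness hypothesis invoked for the moving lemma and the multiplicity-one count. The only cosmetic difference is that you make the excess-intersection worry at the diagonal more explicit than the paper does (which simply cites Collino for the multiplicity), but the argument is the same.
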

\begin{proof} We have a natural correspondence as previous, $\Gamma$ given by $\pi_g\times \pi_{g-1}(Graph(pr))$ on $\Sym^g C\times \Sym^{g-1}C$, where $pr$ is the projection from $C^g$ to $C^{g-1}$, $\pi_i$ is the natural morphism from $C^i$ to $\Sym^i C$. Consider the correspondence $\Gamma_1$ on $J(C)\times \Th_C$ given by
$(f_1\times f_2)(\Gamma)$,
where $f_1,f_2$ are natural morphisms from $\Sym^{g-1}C,\Sym^g C$ to $\Th_C,J(C)$ respectively. Consider the restriction of $\Gamma_1$ to $A\times \Th_A$. Call it $\Gamma_1'$. Let $j$ denote the embedding of $\Th_A$ into $A$. Then by projection formula it follows that
$${\Gamma_1'}_*j_*$$
is induced by $(j\times id)^*(\Gamma_1')$,
this is because $A$ is smooth and hence $j$ is a local complete intersection. Now we compute the cycle
$$(j\times id)^*(\Gamma_1')$$
that is nothing but the collection of divisors
$$(D_1,D_2)$$
such that $D_1$ is linearly equivalent to $\sum_{i=1}^{g-1} x_i-(g-1)p$ and $D_2$ is linearly equivalent to $\sum_{i=1}^{g-1}y_i-(g-1)p$, where
$$([x_1,\cdots,x_{g-1},p],[y_1,\cdots,y_{g-1}])\in \Gamma'\;.$$
Here $\Gamma'$ is the restriction of $\Gamma$ to the scheme-theoretic inverse $A'\times \Th_A'$ of $A\times \Th_A$, under the natural map from $\Sym^g C\times \Sym^{g-1}C$ to $J(C)\times \Th_C$.
Therefore without loss of generality we can assume that elements in $(j\times id)^*(\Gamma_1')$ are classes of effective divisors on $C$ of the form
$$([x_1+\cdots+x_{g-1}+p-gp],[y_1+\cdots+y_{g-1}+p-gp])$$
such that
$$([x_1,\cdots,x_{g-1},p],[y_1,\cdots,y_{g-1}])\in \Gamma'$$
so we get that either
$$x_i=y_i$$
for all $i$ or
$$y_i=p$$
for some $i$. Therefore we get that $(j\times id)^*\Gamma_1'$ is equal to
$$\Delta_{\Th_A\times \Th_A}+Y$$
where $Y$ is supported on $\Th_A\times \sum_{i=1}^{g-2}C_i\cap A$. The fact that the multiplicity of $\Delta$ in $(j\times id)^*(\Gamma_1')$ is $1$ follows from the fact that $\Sym^g C$ maps surjectively and birationally onto $J(C)$, and the computations following \cite{Collino}. Also here the Chow moving lemma holds for $\Th_A\times \Th_A$ by the assumption of the theorem.  So let
$$\rho:U\to \Th_A$$
be the open embedding of the complement of $\sum_{i=1}^{g-2}C_i\cap A$ in $\Th_A$. Then we have
$$\rho^*{\Gamma_1'}_*j_*(Z)=\rho^*(Z+Z_1)=\rho^*(Z)$$
where $Z_1$ is supported on $\sum_{i=1}^{g-2}C_i\cap A$. This follows since $(j\times id)^*(\Gamma_1')=\Delta+Y$, where $Y$ is supported on $\Th_A\times\sum_{i=1}^{g-2}C_i$. Now consider the following commutative diagram.
$$
  \xymatrix{
     \CH_*(\sum_{i=1}^{g-2}C_i\cap A) \ar[r]^-{j'_{*}} \ar[dd]_-{}
  &   \CH_*(\Th_A) \ar[r]^-{\rho^{*}} \ar[dd]_-{j_{*}}
  & \CH_*(U)  \ar[dd]_-{}  \
  \\ \\
   \CH_*(\sum_{i=1}^{g-2}C_i\cap A) \ar[r]^-{j''_*}
    & \CH_*(A) \ar[r]^-{}
  & \CH^*(V)
  }
$$
Here $U,V$ are complements of $\sum_{i=1}^{g-2}C_i\cap A$ in $\Th_A,A$ respectively. Now suppose that $j_*(z)=0$, then from the previous it follows that
$$\rho^*{\Gamma_1'}_*j_*(z)=\rho^*(z)=0$$
by the localisation exact sequence it follows that there exists
$z'$ supported on $\sum_{i=1}^{g-2}C_i\cap A$ such that $j'_*(z')=z$. By the commutativity and the induction hypothesis it follows that
$$j''_*(z')=0$$
since $\sum_{i=1}^{g-2}C_i\cap A$ is of dimension $d-2$, here $d$ is the dimension of $A$. So we get that $z'=0$ hence $z=0$. So $j_*$ is injective.

\end{proof}

The previous theorem gives rise to the following corollary:

\begin{corollary}
Let $\wt{C}\to C$ be an unramified double cover of smooth projective curves. Consider the Prym variety associated to this double cover, denote by $P(\wt{C}/C)$. Consider the embedding of $P(\wt{C}/C)$ into $J(\wt{C})$. Let $\Th'$ be the pullback of the theta divisor on $J(\wt{C})$ to $P(\wt{C}/C)$. Then the closed embedding $\Th'\to P(\wt{C}/C)$ induces an injection at the level of Chow groups.
\end{corollary}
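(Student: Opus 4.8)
The plan is to deduce the corollary directly from Theorem \ref{theorem2}, applied with the ambient curve taken to be $\wt C$ and the abelian variety taken to be $A=P(\wt C/C)$. Recall the standard facts: the Prym is the connected component of the identity of the kernel of the norm map $\Nm\colon J(\wt C)\to J(C)$ (equivalently the image of the endomorphism $1-\sigma$, where $\sigma$ is the sheet-interchanging involution of $\wt C/C$), it carries a principal polarization, and it comes with a natural closed embedding into $J(\wt C)$; if $g=g(C)$, then $\wt g:=g(\wt C)=2g-1$ and $\dim P(\wt C/C)=g-1$. Writing $\Th_{\wt C}$ for the theta divisor of $J(\wt C)$, the divisor $\Th'$ of the statement is by definition the pullback $\Th_{\wt C}\cap P(\wt C/C)$, which is precisely the divisor called $\Th_A$ in Theorem \ref{theorem2} with $A=P(\wt C/C)$ (it is linearly equivalent to twice the principal polarization of the Prym, but only the identification $\Th'=\Th_A$ is needed). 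Hence, once the hypotheses of Theorem \ref{theorem2} are in place, its conclusion yields at once that $j_*\colon \CH_*(\Th')\to\CH_*(P(\wt C/C))$ is injective, which is the assertion to be proved.

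The one condition that must be verified before invoking Theorem \ref{theorem2} (with its genus $g$ equal to $\wt g=g(\wt C)$) is the smoothness hypothesis. Let $A'$ be the inverse image of $P(\wt C/C)$ under the birational morphism $u\colon \Sym^{\wt g}\wt C\to J(\wt C)$; one must show that $\Sym^{\wt g-i}\wt C\cap A'$ is smooth for every $i\ge 0$, where $\Sym^{\wt g-i}\wt C$ is embedded in $\Sym^{\wt g}\wt C$ by adding $i$ copies of the chosen base point. I would establish this by a tangent-space computation at a point represented by an effective divisor $D$ with $u(D)\in P(\wt C/C)$. The tangent space to $\Sym^{\wt g}\wt C$ at $D$ is $H^0(D,N_{D/\wt C})$, and the differential of $u$ is the connecting homomorphism $H^0(D,N_{D/\wt C})\to H^1(\wt C,\bcO_{\wt C})=T_0J(\wt C)$ arising from $0\to\bcO_{\wt C}\to\bcO_{\wt C}(D)\to N_{D/\wt C}\to 0$, whose kernel is the image of $H^0(\bcO_{\wt C}(D))$. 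Where $u$ is a submersion, the tangent space to $A'$ is the preimage under this map of the $(-1)$-eigenspace of $\sigma^{*}$ on $T_0J(\wt C)$, i.e.\ of $T_0P(\wt C/C)$; smoothness of $\Sym^{\wt g-i}\wt C\cap A'$ at $D$ then reduces to transversality of these two tangent spaces together with $D$ imposing the expected number of conditions.

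The main obstacle is exactly this transversality/smoothness statement, and it is the place where the abstract hypothesis of Theorem \ref{theorem2} is genuinely used: for a special double cover the intersections $\Sym^{\wt g-i}\wt C\cap A'$ can fail to be smooth at divisors $D$ that interact badly with the involution—typically where $D$ and $\sigma(D)$ share points, or where $h^0(\bcO_{\wt C}(D))$ jumps so that $u$ ceases to be an immersion and the kernel of $du$ grows. I expect the cleanest route to be to show that these bad strata have positive codimension and miss the relevant symmetric products for a double cover that is general in moduli, so that a Bertini-type argument applied to the generic translate implicit in forming $A'$ delivers the required smoothness; alternatively one performs an explicit eigenspace analysis under $\sigma$ to rule out the singular loci directly. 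Granting the smoothness hypothesis, the corollary then follows verbatim from Theorem \ref{theorem2} applied to $A=P(\wt C/C)\hookrightarrow J(\wt C)$.
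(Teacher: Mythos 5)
Your reduction to Theorem \ref{theorem2}, applied to $A=P(\wt C/C)\hookrightarrow J(\wt C)$ with $\Th'=\Th_A$, is exactly the paper's strategy, and you correctly isolate the smoothness of $\Sym^{\wt g-i}\wt C\cap A'$ as the only hypothesis left to check. But you do not actually check it: your argument ends with ``granting the smoothness hypothesis,'' and of the two routes you sketch, the eigenspace analysis is not carried out, while the other --- discarding the bad strata ``for a double cover that is general in moduli'' --- would at best prove the corollary for a \emph{general} unramified double cover, not for an arbitrary one as the statement requires. This is a genuine gap rather than a stylistic omission, because the entire content of the corollary beyond Theorem \ref{theorem2} is precisely this verification.

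The paper closes the gap by varying a different parameter: not the cover, but the embedding $\Sym^{\wt g-i}\wt C\hookrightarrow\Sym^{\wt g}\wt C$, which depends on the choice of the $i$ points one adds. Writing $P'$ for the preimage of the Prym in $\Sym^{2g-1}\wt C$, it forms the incidence family
$$\bcU=\{([x_1,\cdots,x_{2g-1}],p)\in P'\times\wt C\ |\ p\in[x_1,\cdots,x_{2g-1}]\}$$
whose fibre over $p\in\wt C$ is the intersection $\Sym^{2g-2}\wt C\cap P'$ for the copy of $\Sym^{2g-2}\wt C$ determined by $p$, and then invokes Bertini (generic smoothness) for the projection $\bcU\to\wt C$ to conclude that this intersection is smooth for a general choice of $p$; iterating gives smoothness of a general $\Sym^{2g-i}\wt C\cap P'$ for every $i$, for the \emph{given} cover. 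If you want to complete your tangent-space approach, this is the missing ingredient: the freedom in the added base points is what allows a Bertini argument to run without any genericity assumption on $\wt C\to C$.
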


\begin{proof}
Let $g$ be the genus of $C$. So by Riemann-Hurwitz formula the genus of $\wt{C}$ is $2g-1$. We have the following commutative square.

$$
  \diagram
 \Sym^{2g-1} \wt{C}\ar[dd]_-{\theta_C} \ar[rr]^-{} & & \Sym^{2g-1} C \ar[dd]^-{\theta_{\wt{C}}} \\ \\
  J(\wt{C}) \ar[rr]^-{} & & J(C)
  \enddiagram
  $$
 Then first of all the Prym variety is the image under $\theta_C$ of the double cover $P'$ of a projective space $\PR^g$. This follows from the Riemann-Roch and the very definition of the Prym variety. Now consider the intersection of $\Sym^{2g-i}\wt{C}$ with $P'$, where $i\geq 2$. This intersection is smooth for a general copy of $\Sym^{2g-i}\wt{C}$ in $\Sym^{2g-1}\wt{C}$, in the following way. Consider the family
 $$\bcU:=\{([x_1,\cdots,x_{2g-2},x_{2g-1}],p)\in P'\times \wt{C}|p\in [x_1,\cdots,x_{2g-1}]\}$$
 and the projection from $\bcU$ to $\wt{C}$. Then $\bcU$ is a family of $\Sym^{2g-2}\wt{C}\cap P'$ over $\wt{C}$. Hence by Bertini's theorem for a general $p$, $\bcU_p$ is smooth. Similarly we can prove that a general $\Sym^{2g-i}\wt{C}\cap P$ is smooth. Hence we have the assumption of the Theorem  \ref{theorem2} is satisfied, whence the conclusion follows.
\end{proof}

Now we prove that if we blow up $J(C)$ at finitely many points and denote the blow up by $\wt{J(C)}$ and let $\wt{\Th_C}$ denote the total transform of $\Th_C$, then the closed embedding of $\wt{\Th_C}$ into $\wt{J(C)}$ induces injective push-forward homomorphism at the level of Chow groups.

\begin{theorem}
\label{theorem3}
Let $\wt{J(C)}$ be the blow up of $J(C)$ at some non-singular subvariety $Z$ whose inverse image is $E$. Let $\Th_C$ intersect $Z$ transversely. Let $\wt{\Th_C}$ denote the strict transform of $\Th_C$ in $\wt{J(C)}$. Then the closed embedding of $\wt{\Th_C}$ into $\wt{J(C)}$ induces injective push-forward homomorphism at the level of Chow groups of zero cycles.
\end{theorem}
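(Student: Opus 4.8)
The plan is to reduce the statement to the already-established injectivity of $j_*\colon\CH_0(\Th_C)\to\CH_0(J(C))$ by exploiting the functoriality of proper push-forward for the two blow-down morphisms. Write $\pi\colon\wt{J(C)}\to J(C)$ for the blow-down, $\pi'\colon\wt{\Th_C}\to\Th_C$ for its restriction to the strict transform, and $\wt j\colon\wt{\Th_C}\to\wt{J(C)}$ for the closed embedding. Since $\pi\circ\wt j=j\circ\pi'$ as maps $\wt{\Th_C}\to J(C)$, functoriality of push-forward gives the identity $\pi_*\wt j_*=j_*\pi'_*$ on $\CH_0$. This is the only compatibility I need; in particular I do not require any information about $\pi_*$ itself beyond the fact that it is a homomorphism.

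With this in hand the argument is a short diagram chase. Suppose $z\in\CH_0(\wt{\Th_C})$ satisfies $\wt j_*(z)=0$. Applying $\pi_*$ and using the identity above yields $j_*\pi'_*(z)=0$, and the first theorem of this section (injectivity of $j_*$ for $\Th_C\hookrightarrow J(C)$) forces $\pi'_*(z)=0$. Thus everything comes down to the single point that $\pi'_*$ is injective on zero-cycles, i.e. that blowing down the strict transform creates no new rational equivalence classes of points.

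This last point is where the transversality hypothesis does the work, and it is also the main obstacle, because $\Th_C$ is in general singular, and for a singular target the push-forward along a proper birational morphism need not be injective on $\CH_0$: an exceptional locus can carry zero-cycles that become trivial downstairs. Transversality of $\Th_C$ with $Z$ guarantees that $W:=\Th_C\cap Z$ is smooth and, crucially, contained in the smooth locus of $\Th_C$; hence $\pi'$ is the blow-up of $\Th_C$ along a regularly embedded centre lying in its smooth part, with exceptional divisor the projective bundle $\PR(N_{W/\Th_C})$ over $W$. For such a modification the blow-up formula for Chow groups expresses $\CH_0(\wt{\Th_C})$ as $\CH_0(\Th_C)$ together with summands built from $\CH_{-i}(W)$ with $i\geq 1$, all of which vanish; equivalently, the points of a projective-space fibre of the exceptional divisor are already mutually rationally equivalent. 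Consequently $\pi'_*$ is an isomorphism on $\CH_0$, so $\pi'_*(z)=0$ gives $z=0$, proving injectivity of $\wt j_*$.

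For the concrete case described before the statement, where $Z$ is a finite set of points, transversality with the divisor $\Th_C$ forces $Z\cap\Th_C=\emptyset$ (the dimensions are too small for a nonempty transverse intersection), so the blow-up is an isomorphism over a neighbourhood of $\Th_C$ and the strict transform is literally isomorphic to $\Th_C$. In that situation $\pi'$ is an isomorphism outright and the final step is immediate; the only substantive input is the injectivity of $j_*$ for $\Th_C\hookrightarrow J(C)$. I would also note that the restriction to zero-cycles in the statement is precisely what makes the blow-up correction terms disappear, so one should not expect the same push-forward to be injective on higher-dimensional cycles without further hypotheses.
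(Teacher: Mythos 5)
Your proof is correct in substance but takes a genuinely different route from the paper's. The paper re-runs its correspondence machinery upstairs on the blow-up: it pulls the correspondence $\Gamma_1$ back to $\wt{J(C)}\times\wt{\Th_C}$ via $\pi\times\pi'$, computes $(\wt j\times \id)^*$ of the result as the diagonal plus classes supported on the exceptional locus and on $\wt{\sum_{i=1}^{g-2}C_i}$, and then repeats the localisation-plus-induction diagram chase of the first theorem, which requires as further input the injectivity of $\CH_*(\wt{\sum_i C_i})\to \CH_*(\wt{J(C)})$. You instead reduce directly to the already-proved injectivity of $j_*$ via the functoriality identity $\pi_*\wt j_*=j_*\pi'_*$, and then handle $\pi'_*$ by observing that transversality makes $\pi'$ the blow-up of $\Th_C$ along a smooth centre $W=\Th_C\cap Z$ lying in the smooth locus of $\Th_C$, so that $\pi'_*$ is an isomorphism on $\CH_0$. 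This is shorter, avoids redoing the correspondence computation and the induction, and isolates exactly where transversality and the restriction to zero-cycles enter (the correction terms are $\CH_{-i}(W)=0$ for $i\geq 1$); the trade-off is that it is genuinely confined to $\CH_0$, whereas the paper's template is the one it uses for cycles of all dimensions elsewhere. Two points to tighten: the blow-up decomposition you invoke is usually stated for a smooth ambient variety, and $\Th_C$ is in general singular away from $W$, so you should either appeal to the version for regularly embedded centres or give the direct splitting $s:\CH_0(\Th_C)\to\CH_0(\wt{\Th_C})$ that lifts a point to an arbitrary point of its fibre --- well defined because the fibres of the exceptional divisor over $W$ are projective spaces, and compatible with rational equivalence by passing to strict transforms of curves; and your reading of ``transverse'' must include that $Z$ meets $\Th_C$ only in its smooth locus, which is the standard convention but worth stating since $\Th_C$ is singular for most curves. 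Your closing observation --- that when $Z$ is a finite set of points transversality forces $Z\cap\Th_C=\emptyset$, so the theorem is immediate in the very case the paper goes on to apply it to --- is accurate and worth flagging as a tension with the surrounding discussion, which blows up fixed points of the involution that the theta divisor may actually pass through.
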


\begin{proof}
Let us consider $\pi$ to be the morphism from $\wt{J(C)}$ to $J(C)$. Consider the correspondence $(\pi'\times \pi)^*(\Gamma_1)$, where $\pi'$ is the restriction of $\pi$ to $\wt{\Th_C}$. Call this correspondence $\Gamma'$. Then $\Gamma'_*\wt{j}_*$ is induced by $(\wt{j}\times id)^*\Gamma'$, where $\wt{j}$ is the closed embedding of $\wt{\Th_C}$ into $\wt{J(C)}$. Consider the commutative square.
$$
  \diagram
   \wt{\Th_C}\times \wt{\Th_C}\ar[dd]_-{\pi'\times \pi'} \ar[rr]^-{\wt{j}\times id} & & \wt{J(C)}\times \wt{J(C)} \ar[dd]^-{\pi\times \pi} \\ \\
  \Th_C\times \Th_C \ar[rr]^-{j\times id} & & J(C)\times J(C)
  \enddiagram
  $$

This gives us that
$$(\wt{j}\times \id)^*\Gamma'=(\pi'\times \pi')^*(j\times id)^*\Gamma_1=(\pi'\times \pi')^*(\Delta+Y)$$
where $Y$ is supported on $\Th_C\times \sum_{i=1}^{g-2}C_i$. Now
$$(\pi'\times \pi')^*(\Delta)=\Delta+V$$
where $E$ is the exceptional locus of $\pi$ and $V$ is supported on $(E\cap \wt{\Th_C})\times (E\cap \wt{\Th_C})$. So considering $\rho$ to be the inclusion of the complement of $\wt{\sum_{i=1}^{g-2}C_i}$ in $\wt{\Th_C}$  and applying Chow moving lemma we have
$$\rho^*\Gamma'_*\wt{j}_*=\rho^*\;.$$
Consider the following commutative diagram.
$$
  \xymatrix{
     \CH_0(A) \ar[r]^-{\wt{j'}_{*}} \ar[dd]_-{}
  &   \CH_0(\wt{\Th_C}) \ar[r]^-{\rho_0^{*}} \ar[dd]_-{\wt{j}_{*}}
  & \CH_0(U)  \ar[dd]_-{}  \
  \\ \\
   \CH_0(A) \ar[r]^-{\wt{j''}_*}
    & \CH_0(\wt J(C)) \ar[r]^-{}
  & \CH_0(V)
  }
$$
Here $A= \wt{\sum_{i=1}^{g-2}C_i}$.
Now suppose that $\wt{j}_*(z)=0$. By the previous computation we get that $\rho^*(z)=0$, so by the localisation exact sequence we get that there exists $z'$ in $\CH_*(A)$ such that $\wt{j'}_*(z')=z$.

By induction $\CH_*(A)\to \CH_*(\wt{J(C)})$ is injective. So we get that $z'=0$ hence $z=0$ giving $\wt{j_*}$ injective.

\end{proof}

Now let $A$ be an abelian surface which is embedded in some $J(C)$. Let $i$ denote the involution of $A$. Then $i$ has $16$ fixed points. We blow up $A$ along these fixed points. Then we get $\wt{A}$ on which we have an induced involution, call it $i$. Let $\wt{\Th_A}$ denote the total transform of $\Th_A$ in $\wt{A}$. Then the above discussion tells us the following.

\begin{theorem}
The closed embedding of $\wt{\Th_A}$ into $\wt{A}$ induces injective push-forward homomorphism at the level of Chow groups of zero cycles.
\end{theorem}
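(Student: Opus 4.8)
The plan is to specialize the argument of Theorem \ref{theorem3}, now taking the abelian surface $A$ in the role of $J(C)$, the divisor $\Th_A$ in the role of $\Th_C$, and the $16$ fixed points of the involution $i$ in the role of the smooth center $Z$. These fixed points are exactly the $2$-torsion $A[2]\cong(\ZZ/2\ZZ)^4$; being a reduced finite set, they constitute a non-singular zero-dimensional subvariety, so the blow-up $\pi:\wt A\to A$ has exceptional locus $E$ equal to a disjoint union of $16$ copies of $\PR^1$. Since $\wt{\Th_A}$ is the total transform $\pi^{-1}(\Th_A)$, it is the union of the strict transform of $\Th_A$ with the exceptional lines lying over those $2$-torsion points contained in $\Th_A$. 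Assuming $\Th_A$ meets these points transversally (equivalently, $\Th_A$ is smooth at each $2$-torsion point lying on it), all hypotheses of Theorem \ref{theorem3} are met; the role played there by the injectivity of $\Th_C\hookrightarrow J(C)$ is now played by the injectivity of $j_*:\CH_*(\Th_A)\to\CH_*(A)$ furnished by Theorem \ref{theorem2}.

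Concretely, I would let $\Gamma_1'$ be the correspondence on $A\times\Th_A$ built in the proof of Theorem \ref{theorem2} and set $\Gamma'=(\pi'\times\pi)^*(\Gamma_1')$, where $\pi'$ is the restriction of $\pi$ to $\wt{\Th_A}$. Since $\wt A$ is smooth, the embedding $\wt j:\wt{\Th_A}\to\wt A$ is a regular embedding, so $\Gamma'_*\wt j_*$ is induced by the cycle $(\wt j\times\id)^*\Gamma'$. The commutative square
$$
  \diagram
   \wt{\Th_A}\times \wt{\Th_A}\ar[dd]_-{\pi'\times \pi'} \ar[rr]^-{\wt{j}\times \id} & & \wt A\times \wt A \ar[dd]^-{\pi\times \pi} \\ \\
  \Th_A\times \Th_A \ar[rr]^-{j\times \id} & & A\times A
  \enddiagram
$$
then gives $(\wt j\times\id)^*\Gamma'=(\pi'\times\pi')^*(j\times\id)^*\Gamma_1'=(\pi'\times\pi')^*(\Delta+Y)$, where by Theorem \ref{theorem2} the cycle $Y$ is supported on $\Th_A\times(\sum_{i=1}^{g-2}C_i\cap A)$; because $\dim A=2$, this locus $\sum_{i=1}^{g-2}C_i\cap A$ is zero-dimensional. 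Pulling the diagonal back through the blow-up contributes $(\pi'\times\pi')^*\Delta=\Delta+V$ with $V$ supported on $(E\cap\wt{\Th_A})\times(E\cap\wt{\Th_A})$.

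From this point the proof runs exactly as in Theorem \ref{theorem3}. Letting $\rho$ be the inclusion into $\wt{\Th_A}$ of the complement $U$ of the total transform of $\sum_{i=1}^{g-2}C_i\cap A$ and applying the Chow moving lemma (available by the smoothness assumption, as in Theorem \ref{theorem2}), one obtains $\rho^*\Gamma'_*\wt j_*=\rho^*$ on zero cycles. If $\wt j_*(z)=0$ then $\rho^*(z)=0$, and the localisation exact sequence produces a class $z'$ supported on the transform of $\sum_{i=1}^{g-2}C_i\cap A$ with $\wt{j'}_*(z')=z$; the commutativity of the analogue of the localisation diagram of Theorem \ref{theorem3} then yields $\wt{j''}_*(z')=0$, and since the relevant locus is zero-dimensional this should force $z'=0$ and hence $z=0$.

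The step I expect to be the main obstacle is twofold. First, one must control the exceptional contribution $V$ supported on $(E\cap\wt{\Th_A})\times(E\cap\wt{\Th_A})$ and check that, after restriction to $U$ and passage to $\CH_0$, it does not perturb the identity coming from $\Delta$; this is precisely where the transversality of $\Th_A$ with the $16$ fixed points enters, together with the fact that each exceptional $\PR^1$ has all its points rationally equivalent, so that $\pi_*:\CH_0(\wt A)\to\CH_0(A)$ is an isomorphism and the spurious terms collapse. Second, one must verify the zero-dimensional base case, namely that the push-forward into $\wt A$ from the transform of $\sum_{i=1}^{g-2}C_i\cap A$ is injective on $\CH_0$; this is the terminal step of the induction on dimension used in Theorem \ref{theorem2}. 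A final routine point is that $\Delta$ occurs with multiplicity one in $(\wt j\times\id)^*\Gamma'$, which follows from the birationality of $\pi$ and of the symmetric-product maps, exactly as in Theorem \ref{theorem3}.
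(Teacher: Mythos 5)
Your proposal follows exactly the route the paper intends: the paper offers no separate argument for this theorem, presenting it as an immediate specialization of Theorem \ref{theorem3} (with $A$ in place of $J(C)$, $\Th_A$ in place of $\Th_C$, and the $16$ fixed points as the blow-up center), with the injectivity input supplied by Theorem \ref{theorem2}. Your write-up is in fact more careful than the paper itself, in particular in flagging the total-transform versus strict-transform discrepancy and the zero-dimensional base case, both of which the paper leaves implicit.
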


Now $\wt{A}/i$ is the Kummer's K3 surface associated to $A$. Suppose that we choose $\Th_C$ such that it is $i$ invariant. Then $\wt{\Th_A}$ will be $i$ invariant. The above theorem gives us:

\begin{theorem}
The closed embedding of $\wt{\Th_A}/i$ into $\wt{A}/i$ induces injective push-forward homomorphism at the level of Chow groups of zero cycles with $\QQ$-coefficients.
\end{theorem}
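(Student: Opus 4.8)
The plan is to deduce the injectivity for the quotient from the injectivity of $\wt{j}_*\colon \CH_0(\wt{\Th_A})\to \CH_0(\wt{A})$ established in the previous theorem, by means of the usual transfer argument for quotients by a finite group, which becomes available after passing to $\QQ$-coefficients. Set $G=\langle i\rangle$, a group of order $2$, and write $q\colon \wt{A}\to \wt{A}/i$ and $q'\colon \wt{\Th_A}\to \wt{\Th_A}/i$ for the two quotient maps and $\bar{j}\colon \wt{\Th_A}/i\to \wt{A}/i$ for the induced closed embedding. Since $\wt{\Th_A}$ is $i$-invariant we have $q\circ \wt{j}=\bar{j}\circ q'$, and since $\wt{j}$ is $i$-equivariant the pushforward $\wt{j}_*$ commutes with $i_*$; in particular $\wt{j}_*$ carries the $i$-invariant part of $\CH_0(\wt{\Th_A})_\QQ$ into the $i$-invariant part of $\CH_0(\wt{A})_\QQ$.

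First I would record the transfer identities. The Kummer surface $\wt{A}/i$ is smooth, $q$ is finite surjective of degree $2$ (ramified along the images of the $16$ exceptional curves), and likewise $q'$ is finite surjective of degree $2$; so with $\QQ$-coefficients the pullbacks $q^*,q'^*$ are defined and satisfy
$$q_*q^*=2\cdot\id,\qquad q'_*q'^*=2\cdot\id,\qquad q^*q_*=\id+i_*.$$
From $q^*q_*=\id+i_*$ one sees that on the $i$-invariant subgroup of $\CH_0(\wt{A})_\QQ$ the operator $q^*q_*$ equals $2\cdot\id$, so $q_*$ is injective there; and from $q'_*q'^*=2\cdot\id$ the map $q'^*$ is injective on all of $\CH_0(\wt{\Th_A}/i)_\QQ$.

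Next I would run the diagram chase. Suppose $\bar{j}_*(\alpha)=0$ for some $\alpha\in \CH_0(\wt{\Th_A}/i)_\QQ$, and put $\gamma=q'^*(\alpha)$, an $i$-invariant class. Functoriality of proper pushforward applied to $q\circ\wt{j}=\bar{j}\circ q'$ gives $q_*\wt{j}_*(\gamma)=\bar{j}_*q'_*(\gamma)=\bar{j}_*(2\alpha)=0$. Now $\wt{j}_*(\gamma)$ is $i$-invariant, so injectivity of $q_*$ on invariants forces $\wt{j}_*(\gamma)=0$; the previous theorem says $\wt{j}_*$ is injective on $\CH_0$, and since $\QQ$ is flat over $\ZZ$ this injectivity persists after $-\otimes\QQ$, whence $\gamma=0$. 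Finally $\alpha=\tfrac12 q'_*q'^*(\alpha)=\tfrac12 q'_*(\gamma)=0$, so $\bar{j}_*$ is injective.

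The routine verifications are the smoothness of the Kummer quotient, the flatness of $q$ (a finite morphism of smooth surfaces), and the degree-$2$ transfer formulas displayed above; these are standard. The step I expect to require the most care, and which I would justify explicitly, is the clean validity of the transfer formulas for $q'$ in the ramified situation along $\wt{\Th_A}$: one must make sure that $q'^*$ is well-defined with $q'_*q'^*=2\cdot\id$ even though $\wt{\Th_A}$ may fail to be smooth, which is precisely where the reduction to $\QQ$-coefficients and the finite-group-quotient formalism of Fulton's intersection theory does the essential work.
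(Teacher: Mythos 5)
Your transfer argument is correct and is precisely the standard way to deduce this statement from the preceding theorem; the paper itself offers no written proof (it simply asserts that the previous theorem "gives us" the result), so your write-up supplies exactly the intended reasoning. The identities $q'_*q'^*=2\cdot\id$ and $q^*q_*=\id+i_*$, valid with $\QQ$-coefficients for quotients by a finite group even when $\wt{\Th_A}$ is singular, together with the injectivity of $\wt{j}_*$ and of $q_*$ on $i$-invariants, close the diagram chase as you describe.
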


Note that all these techniques can be repeated if we consider the group of algebraic cycles modulo algebraic equivalence. Then therefore the closed embedding $\wt{\Th_A}/i$ into $\wt{A}/i$ induces injection at the level of zero cycles modulo algebraic equivalence.

\end{document}